\documentclass [11pt]{amsart}
\usepackage{xcolor,bm}
\usepackage{amssymb,amsmath,amsthm,amsfonts}
\usepackage{enumerate}
\usepackage{setspace}
\usepackage{empheq}
\usepackage[all]{xy}
\usepackage{verbatim}
\usepackage[normalem]{ulem}
\usepackage{todonotes}

\usepackage{hyperref}  
\hypersetup{linkcolor=blue,citecolor=blue,filecolor=blue,urlcolor=blue} 

\numberwithin{equation}{section}
\numberwithin{figure}{section}

\newtheorem{theorem}{Theorem}[section]
\newtheorem{proposition}[theorem]{Proposition}

\theoremstyle{definition}

\usepackage[
backend=biber,
style=alphabetic,
]{biblatex}

\definecolor{myblue}{rgb}{0.6, 0.9, 1}

\makeatletter

\newcommand{\Rmnum}[1]{\expandafter\@slowromancap\romannumeral #1@}
\makeatother

\definecolor{myblue}{rgb}{0.6, 0.9, 1}
\definecolor{mygreen}{rgb}{0,0,1}
\definecolor{purple}{rgb}{0.6,0.2,1}
\definecolor{orange}{rgb}{0.8,0,0.2}

\newcommand{\Gal}{\operatorname{Gal}}

\newcommand{\bb}{\mathbb}

\newcommand{\ovl}{\overline}

\newcommand{\Hom}{\operatorname{Hom}}

\newcommand{\End}{\operatorname{End}}

\theoremstyle{definition} 

\begin{document}
\title{Finiteness of an Isogeny Class via Equidistribution}

\author{Jit Wu Yap}

\date{}

\begin{abstract}
In a very recent, beautiful paper, Junyi Xie and Ziquan Yang \cite{XY26} gave a new proof of Faltings' isogeny theorem over number fields by reducing to Tate's theorem over finite fields. We show that the same ideas can be adapted to prove finiteness of an isogeny class over number fields from the finiteness of an isogeny class over finite fields. Hence we obtain a new proof of the Shafarevich conjecture over number fields. The proof in this paper was obtained through the use of GPT6 Astra Ultra. 
\end{abstract}

	\maketitle

\section{Introduction}
Recently, Junyi Xie and Ziquan Yang \cite{XY26} gave a beautiful new proof of Faltings' isogeny theorem \cite{Fal83} (also known as Tate's conjecture) over number fields $K$. The main new idea is a very clever use of Yuan's equidistribution theorem on a non-archimedean place of good reduction \cite[Proposition 4.2]{XY26}, giving a new way to algebraize $\Gal(\ovl{K}/K)$-equivariant homomorphisms on Tate modules. 
\par 
The main aim of this short note is to show that this same new idea leads to a new short proof of the finiteness of isogeny classes. 

\begin{theorem} \label{IntroTheorem1}
Let $K$ be a number field and $A$ an abelian variety over $K$. Then there are only finitely many abelian varieties $A'$ over $K$, up to $K$-isomorphism, that are $K$-isogeneous to $A$. 
\end{theorem}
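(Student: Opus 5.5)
The plan follows Tate's reduction of finiteness of isogeny classes to a statement about lattices in the Tate modules, with Faltings' height replaced by the equidistribution argument of \cite{XY26}. After a finite extension and Zarhin's trick we may assume $A$ is principally polarized. Every $A'$ isogenous to $A$ is then a quotient $A/G$ by a finite subgroup $G$, and it is enough to bound the $K$-isomorphism classes of the $A/G$.

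\textbf{Step 1: control of the $\ell$-part.} Split $G=G_\ell\times G^{(\ell)}$. For fixed $\ell$, the $\ell$-primary quotients correspond to $\Gal(\ovl{K}/K)$-stable lattices $\Lambda\supseteq T_\ell A$ in $V_\ell A$. By Tate's argument, finiteness modulo isomorphism follows once we know two things. First, $\End(A)\otimes\bb{Z}_\ell\to\End_{\Gal}(T_\ell A)$ is an isomorphism; this is the isogeny theorem, which \cite{XY26} proves via equidistribution. Second, the Galois-stable lattices fall into finitely many orbits under $(\End(A)\otimes\bb{Q}_\ell)^\times$. Given semisimplicity, which also comes from \cite{XY26}, the second point reduces to a finiteness statement about lattices stable under the group ring. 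This statement is Jordan--Zassenhaus for the $\bb{Z}_\ell$-order generated by the image of Galois.

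\textbf{Step 2: uniformity over all $\ell$.} This is the main obstacle, because Step 1 alone gives infinitely many primes. The first thing to try is to show that for all but finitely many $\ell$, every $\Gal$-stable subgroup of $A[\ell^n]$ comes from $\End(A)$, so that $A/G\cong A$ up to twisting by an endomorphism. The tool is Xie--Yang's argument at a good reduction place $v$. Over the finite residue field $k_v$ we have the finiteness of the isogeny class of the reduction $A_v$ (Tate/Zarhin over finite fields), so the reductions $(A/G)_v$ lie in finitely many $k_v$-isomorphism classes.

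One then wants to lift an isomorphism $(A/G)_v\cong(A/G')_v$ to characteristic $0$. The route is to apply the equidistribution proposition \cite[Proposition 4.2]{XY26} to the graph of the composite isogeny $A\to A/G\cong_v A/G'\leftarrow A$. This is a correspondence on $A\times A$ defined over $k_v$, and the goal is to show it algebraizes to a $K$-homomorphism. More precisely, one takes small points of the corresponding subvariety of $A\times A$. Yuan's theorem makes their Galois orbits equidistribute in the Berkovich analytification at $v$, and this equidistribution forces the relevant $\Gal$-equivariant map on $T_\ell$ to preserve a $K$-rational subvariety. That yields an isogeny defined over $K$ of degree prime to $\ell$, hence $A/G\cong A/G'$ up to finitely many choices.

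\textbf{Step 3: combining.} Step 2 leaves only finitely many $\ell$ for which $G_\ell$ matters. Step 1 then bounds each of these $\ell$-parts, and the finitely many $K$-forms from the initial field extension (an $H^1$ of a finite automorphism group) are also finite in number. I expect Step 2's lifting from the residue field to be the delicate point. If it fails, the fallback is to prove that the Galois image has the full commutant closure mod $\ell$ for large $\ell$, using the same equidistribution argument applied to $\End_{\Gal}(A[\ell])$.
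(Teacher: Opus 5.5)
\textbf{The gap is Step 2.} This is not a technicality: uniformity in $\ell$ is the whole content of the theorem, because Step 1 is Tate's classical argument and treats each $\ell$ separately. Your lifting mechanism cannot work as described.

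\cite[Proposition 4.2]{XY26}, and its extension Proposition~\ref{NonArchimedean1}, only transfers information in one direction. If a $\Gal(\ovl{K}/K)$-invariant set of prime-to-$p$ torsion points on a variety over $K$ reduces into $Z_0$, then so does its Zariski closure. To algebraize anything you must therefore already have an infinite, globally Galois-stable torsion set. In Xie--Yang this set is the graph of a $\Gal(\ovl{K}/K)$-equivariant map of Tate modules.

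A $k_v$-isomorphism $\iota:(A/G)_v\cong(A/G')_v$ gives only a subvariety $Z_0\subseteq A_v\times A_v$ over $k_v$. Equivalently, it gives the endomorphism $\psi'\circ\iota\circ\phi$ of $A_v$ killing $G_v$, where $\psi'\circ\psi=[\deg\psi]$. Its $\ell$-adic realization commutes with the decomposition group at $v$, but with nothing else in $\Gal(\ovl{K}/K)$. So no Galois-stable set of torsion points of $A\times A$ is attached to it. Likewise, ``small points of the corresponding subvariety'' presupposes the characteristic-zero subvariety you are trying to construct. Special-fibre homomorphisms generally do not lift (Frobenius is the typical example), and nothing in your setup forces this one to.

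Even granting a $K$-isogeny $A/G\to A/G'$ of degree prime to $\ell$, it would not give $A/G\cong A/G'$. Making the ``finitely many choices'' uniform in $\ell$ is exactly the problem. The target of Step 2 is Faltings' mod-$\ell$ theorem: for almost all $\ell$, $A[\ell]$ is semisimple and $\End(A)\otimes\bb{F}_\ell=\End_{\Gal}(A[\ell])$. That theorem is normally obtained from the height argument, or deduced from the very finiteness statement you are proving.

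Your fallback has the same defect. The graph of an element of $\End_{\Gal}(A[\ell])$ is a finite set, and the equidistribution proposition gives no information about a single finite Galois-stable set.

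\textbf{The missing idea} is to get uniformity over all $\ell\neq p$ at once, without lifting any special-fibre map. The paper does this as follows.
\begin{enumerate}
\item It builds $X=\prod_i C_i^{r_i}$ over $K$ and $G_0:X_0\to A_0$ from $D_j$-bases of $\Hom(C_{j,0},A_0)\otimes\bb{Q}$. Then every element of $\End(A_0)$ is, up to a bounded integer $s$, of the form $G_0\circ\ovl{c}$ with $c\in\Hom_K(A,X)$. At the same time, no positive-dimensional abelian subvariety of $X$ reduces into $\ker G_0$.
\item Proposition~\ref{NonArchimedean1} then shows that the set $T$ of prime-to-$p$ torsion points of $X$, all of whose conjugates reduce into $\ker G_0$, is a single finite group. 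Its exponent $m$ does not depend on $H$.
\item Given $H$, finiteness over $k_v$ provides $f_0$ killing $H_{p',0}$ with bounded index. Write $sf_0=G_0\ovl{c}$. The set $c(H_{p'})$ is Galois-stable because $H$ is, so it lies in $T$.
\item Hence $[m]c$ is a global map killing $H_{p'}$, and its kernel exceeds $H_{p'}$ by a bounded index.
\item A second place handles the $p$-part. Rémond's finiteness for the quotients $A/\tilde{H}\hookrightarrow A^n$ finishes the proof.
\end{enumerate}
The globally Galois-stable torsion set that your Step 2 lacks is $c(H_{p'})$ itself.

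Separately, in Step 3 the automorphism group of an unpolarized abelian variety need not be finite, so counting $K$-forms needs more than an $H^1$ of a finite group. It is simpler to avoid the initial field extension, which Tate's lattice argument does not require.
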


As in Faltings' proof of the Mordell conjecture \cite{Fal83}, Theorem \ref{IntroTheorem1} implies the Shafarevich conjecture over number fields. This then implies the Mordell conjecture over number fields via Parshin's trick. Hence we obtain a new proof of the Shafarevich conjecture and the Mordell conjecture. 
\par 
Other than \cite[Proposition 4.2]{XY26}, the proof uses a result of Rémond on isomorphism classes of abelian varieties over a general field $K$. There has been an in depth study by Gaudron and Rémond \cite{GR23} on such results, as part of their work in extending Masser--Wüstholz bounds \cite{MW93} to various settings. It is an interesting question to ask whether it is possible to make \cite{XY26} quantitative enough to obtain a new proof of Masser--Wüstholz type bounds. 
\par 
Our proof uses the finiteness of an isogeny class of abelian varieties over a finite field \cite[Corollary 13.13]{MilneAV} to deduce the corresponding statement over number fields. In \cite{XY26}, Tate's theorem over finite fields is used to deduce the Faltings' isogeny theorem. This makes one wonder if there exists a direct proof of the Mordell conjecture along this lines, where one uses the fact that the Mordell conjecture is trivially true over finite fields as $C(\bb{F}_q)$ is obviously finite in size.
\par 
In \cite{XY26}, the non-archimedean equidistribution result \cite[Proposition 4.2]{XY26} was used to lift endomorphisms on the special fiber to the generic fiber. Our way (or GPT's way) of applying \cite[Proposition 4.2]{XY26} is different. We instead use it to show that infinitely many torsion points cannot reduce to an abelian subvariety $B_0$ on the special fiber, which is not liftable. More precisely,  we have the following. 

\begin{proposition} \label{IntroNonArchimedean1}
Let $A_0$ denote the special fiber of $A$ and let $B_0 \subseteq A_0$ be an abelian subvariety such that no positive dimensional abelian subvariety $C \subseteq A$ reduces to a subvariety of $B_0$. Let $T$ be a $\Gal(\ovl{K}/K)$-invariant set of prime-to-$p$ torsion points that reduces to $B_0$. Then $T$ is finite.  
\end{proposition}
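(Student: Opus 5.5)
Suppose for contradiction that $T$ is infinite. The plan is to reach a contradiction by equidistribution at a place $v$ of good reduction of residue characteristic $p$.

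First, replace $T$ by a subset that converges generically. Consider the Zariski closure $Z$ of $T$ in $A$. Since $T$ is $\Gal(\ovl{K}/K)$-invariant, $Z$ is defined over $K$. Each irreducible component of $Z$ contains a Zariski dense set of torsion points, so by Manin--Mumford (Raynaud) it is a torsion translate $x + C$ of an abelian subvariety $C \subseteq A$. Because $T$ is infinite, at least one component $Y = x+C$ has $\dim C > 0$. After passing to a finite extension of $K$ and replacing $T$ by $T \cap Y$ (still Galois stable over the new field, and still infinite), we may translate by $-x$ and work on $C$. The torsion point $x$ may be taken prime-to-$p$ after adjusting: its $p$-part is a limit of differences of points of $T$, and those reduce into $B_0$. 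We obtain an infinite Galois-stable set $T'$ of torsion points in $C$, generic in $C$, whose reductions lie in a translate of $B_0$ by a torsion point of $A_0$. Taking any sequence of distinct orbits in $T'$, we get a generic sequence in $C$: no subsequence lies in a proper closed subvariety, since by Manin--Mumford again such a subvariety contains only finitely many torsion orbits outside finitely many proper torsion cosets, and we can diagonalize. The orbits have Néron--Tate height $0$.

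Next, apply \cite[Proposition 4.2]{XY26} on $C$ at $v$. The Galois orbits of this generic small sequence equidistribute on the Berkovich analytification $C_v^{\mathrm{an}}$ toward the canonical measure. Since $C$ has good reduction at $v$, this measure is the Dirac mass at the Gauss point, that is, the point whose reduction is the generic point of the special fiber $C_0$. Equivalently, for every proper closed subset $W_0 \subsetneq C_0$, the proportion of the orbit reducing into $W_0$ tends to $0$. The specialization map is anticontinuous, so the tube over $W_0$ is open with closed complement; this should give the needed limsup/liminf statement, and it is the form in which I expect \cite{XY26} to state it.

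Finally, take $W_0 = C_0 \cap (y + B_0)$. By hypothesis $C_0 \not\subseteq B_0$. The points of $T'$ reduce into finitely many prime-to-$p$ torsion translates of $B_0$ intersected with $C_0$. In fact they reduce into a single translate $y+B_0$ after the normalization above, since the reductions of $T$ lie in $B_0$. We need $C_0 \not\subseteq y + B_0$. If $C_0 \subseteq y+B_0$, then $C_0 \subseteq B_0$ because $C_0$ contains $0$, which contradicts the hypothesis. So $W_0$ is a proper closed subset of $C_0$, yet all the points reduce into it. This contradicts equidistribution, so $T$ is finite.

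The main obstacle is the reduction step: arranging, via Manin--Mumford, a generic sequence on a positive-dimensional abelian subvariety while keeping the prime-to-$p$ and translate bookkeeping. The key hypothesis is used only at the final step, through $C_0 \not\subseteq B_0$.
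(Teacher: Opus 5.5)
Your overall architecture matches the paper's. The paper proves the general form (Proposition \ref{NonArchimedean1}) by using Manin--Mumford to write the Zariski closure as torsion cosets $a_i+C_i$ and running the argument of \cite[Proposition 4.2]{XY26} on each coset. Then $\ovl{a}_i+C_{i,0}\subseteq B_0$ forces $C_{i,0}\subseteq B_0$, exactly as in your last step.

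The genuine gap is your middle step. You claim that equidistribution of the orbits toward the Dirac mass $\delta_\xi$ at the Gauss point is \emph{equivalent} to the proportion of each orbit reducing into a proper closed $W_0\subsetneq C_0$ tending to $0$, and you justify this by anticontinuity. Anticontinuity gives the wrong inequality. The tube $]W_0[$ is open, so weak convergence only yields $\liminf_n\mu_n(]W_0[)\ge 0$. Its closure contains $\xi$ (already a residue disc accumulates at the Gauss point), so no upper bound follows. The claim is in fact false. Suppose $E/K$ has supersingular good reduction at $v$. Any sequence of distinct points of $E[p^\infty]$ is generic and small, so its Galois orbits equidistribute to $\delta_\xi$, yet every such point reduces to $0$ (compare $\mu_{p^\infty}\subseteq\mathbb{G}_m$, all reducing to $1$). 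With $B_0=0$, the same example shows the proposition is false without the prime-to-$p$ hypothesis. Your argument uses that hypothesis only for bookkeeping, so it cannot be correct as written.

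The missing idea is where prime-to-$p$ enters. For $p\nmid N$, the $N$-torsion of the abelian scheme over the valuation ring is finite \'etale. Hence every conjugate $z$ of a point of $T'$ extends to an integral point over an \emph{unramified} extension of the local field. So if $z$ reduces into $W_0$, then $|f(z)|\le|\varpi|$ for all local sections $f$ of the ideal of $W_0$ in the model, where $\varpi$ is a uniformizer of the base. These inequalities cut out a compact, hence closed, subset $F\subseteq{]W_0[}$ with $\xi\notin F$. Now $\limsup_n\mu_n(F)\le\delta_\xi(F)=0$ contradicts $\mu_n(F)=1$. This argument, or \cite[Proposition 4.2]{XY26} cited in its actual form for prime-to-$p$ torsion rather than for arbitrary small generic sequences, is what must replace the anticontinuity remark.

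Two smaller repairs are needed.
\begin{itemize}
\item The sentence about the $p$-part of $x$ being a limit of differences is not meaningful, but it is also unnecessary. Since $T\cap Y$ is dense in $Y$, you may simply take $x\in T\cap Y$; then $(T\cap Y)-x$ consists of prime-to-$p$ torsion points reducing into $B_0$.
\item An arbitrary sequence of distinct orbits need not be generic. Choose $x_n$ avoiding the first $n$ terms of an enumeration of the countably many proper torsion cosets of $C$.
\end{itemize}
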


We outline the strategy of the proof of Theorem \ref{IntroTheorem1} when $A$ is simple and $\End(A) = \bb{Z}$. Given an isogeny $\pi: A \to B$, let $H = \ker(\pi)$. We aim to show that $B$ embeds into $A^n$ for some $n \geq 1$, which would then prove finiteness of such $B$'s by a result of Rémond \cite[Proposition 1.4]{Rem17}. Note that Rémond's result just uses abstract properties of abelian varieties over a general field $K$.
\par 
To do so, we view the situation as trying to map $A$ to $A^n$ with kernel containing $H$. Such a map is given by $(e_1,\ldots,e_n)$ where $e_i \in \End(A)$. Then the kernel is given by $\bigcap_{i=1}^{n} \ker(e_i)$. Hence given $H$, we will define 
$$I_H = \{ u \in \End(A) \mid H \subseteq \ker(u) \} \text{ and } \tilde{H} = \bigcap_{u \in I_H} \ker(u).$$
Then $A/\tilde{H}$ embeds into $A^n$ for some $n$ and $H \subseteq \tilde{H}$. It suffices to show that $[\tilde{H}:H]$ is uniformly bounded independent of $H$. 
\par 
Now pick a place $v$ of good reduction where $k(v)$ has characteristic $p$, and for simplicity we shall assume that $\gcd(|H|,p) = 1$. Then over the finite field $k(v)$, bounding $[\tilde{H}:H]$ uniformly trivially follows from the finiteness of abelian varieties of a given dimension over a finite field. If we may lift endomorphisms of $\End(A_0)$ to $\End(A)$, then we may bound $[\tilde{H}:H]$ as prime-to-$p$ specialize injectively. However we certainly cannot always do this. We will instead use Proposition \ref{IntroNonArchimedean1} to show that it suffices to look at endomorphisms who do lift.
\par 
For simplicity let's assume that $\End(A_0)$ has two generators, $1$ and $\tau$.  We consider the map $G_0: A_0^2 \to A_0$, given by $(x_1,x_2) \mapsto x_1+ \tau x_2$. Then $\ker(\Phi) = \{x_1 = -\tau x_2\}$ which does not lift by assumption. Hence by Proposition \ref{IntroNonArchimedean1}, a $\Gal(\ovl{K}/K)$-set $H$ that reduces entire to $\ker(\Phi)$ must be uniformly bounded in size. We let $m$ annlihate all such sets $H$. Let's say $u \in \End(A_0)$ satisfies $H_0 \subseteq \ker(u)$ with $[\ker(u):H_0]$ giving us our uniform bound. We write $u = a + b \tau$. Then if $x \in H$ reduces to $x_0 \in H_0$, we have 
$$([a]x_0, [b]x_0) \in  \ker(\Phi) \implies m([a]x_0, [b]x_0) = 0 \implies x \in \ker(m[a]).$$
If $a$ is non-zero, it suffices to take the endomorphism $m[a]$ and so $[\tilde{H}:H]$ is still uniformly bounded on the generic fiber as desired.
\par
\quad 
\par 
\textbf{AI Disclosure:} The author had previously thought about applying non-archimedean equidistribution to obtain results related to Theorem \ref{IntroTheorem1}. After coming across \cite{XY26}, the author asked GPT6 Astra Ultra if it was possible to use their ideas to obtain Theorem \ref{IntroTheorem1} and an outline of a proof was given relatively quickly. The first proof used ultrafilters and is similar to the strategy in \cite{XY26}. The proof in this paper was obtained after asking the model to attempt using the ideas of Gaudron--Rémond \cite{GR23} in its solution. Every line in this paper is written by a human. GPT 5.6 Sol was used to proofread the paper.

\section{Finiteness of Isogeny Classes}
We first state a slight strengthening of \cite[Proposition 4.2]{XY26} that we will use. In our version, we would like to allow $H$ to be any $\Gal(\ovl{K}/K)$-invariant set of prime-to-$p$ torsion points. In \cite[Proposition 4.2]{XY26}, an $\ell$-divisibility condition is imposed on $H$. We show that this is essentially unnecessary.  

\begin{proposition} \label{NonArchimedean1}
Let $K$ be a number field and $A/K$ an abelian variety. Let $v$ be a finite place of $K$ such that $A$ has good reduction and let $p$ be the characteristic of the residue field at $v$. Let $H$ be a $\Gal(\ovl{K}/K)$-invariant set of prime-to-$p$ torsion points and let $V$ be the Zariski closure of $H$. Let $Z_0$ be a subvariety of the special fiber $\mathcal{A}_v/O_{K_v}$ such that $H$ all reduces to $Z_0$. Then the reduction of $V$ is also contained in $Z_0$. 
\end{proposition}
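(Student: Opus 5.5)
The plan is to deduce the statement from \cite[Proposition 4.2]{XY26}, whose conclusion is exactly ours but which assumes an $\ell$-divisibility condition on $H$. We first reduce to the case $V$ irreducible. Since $H$ is infinite only if $V$ is positive dimensional, and the finite part is trivial (a finite set of points reduces into $Z_0$ by hypothesis), we decompose $V = V_1 \cup \dots \cup V_r$ into $\Gal(\ovl{K}/K)$-orbits of geometric irreducible components. After a finite extension $K'/K$ (unramified at $v$ is not needed; we only need a place $v'$ above $v$, and good reduction is preserved), each $V_i$ is defined over $K'$ and $H \cap V_i$ is Zariski dense in $V_i$ and $\Gal(\ovl{K}/K')$-invariant. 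So it suffices to treat $V$ geometrically irreducible with $H$ dense in $V$.

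Next we use that a geometrically irreducible subvariety containing a Zariski dense set of torsion points is a torsion coset $V = x + C$ with $C$ an abelian subvariety and $x$ torsion. This is Manin--Mumford/Raynaud, but we want to avoid it if possible. Instead, the route I would try first is to verify the hypothesis of \cite[Proposition 4.2]{XY26} after enlarging $H$. Replace $H$ by $H' = H + A[\ell^\infty]$ intersected appropriately? That changes $V$. A better enlargement: for a fixed prime $\ell \neq p$, consider $H'' = \{ y : \ell^k y \in H \text{ for some } k\}$, which is Galois invariant, prime-to-$p$, and $\ell$-divisible. However its reduction need not lie in $Z_0$. So instead we pull back: let $[\ell^k]^{-1}$ act, and use that $[\ell^k]$ commutes with reduction. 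Define $Z_0' = [\ell^k]^{-1}(Z_0)$; then preimages of $H$ reduce into $Z_0'$, and Zariski closures satisfy $\overline{[\ell^k]^{-1}H} = [\ell^k]^{-1}V$ since $[\ell^k]$ is finite flat. Applying \cite[Proposition 4.2]{XY26} to the $\ell$-divisible set $\bigcup_k [\ell^k]^{-1}H$ fails because the target $Z_0'$ varies with $k$; this is where care is needed.

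The cleanest fix, and the step I expect to be the main obstacle, is matching the precise form of the $\ell$-divisibility condition in \cite[Proposition 4.2]{XY26}. My expectation is that the condition there is used only to guarantee that the Galois orbits in $H$ equidistribute on $V$ with respect to a canonical measure (via Yuan's theorem one needs a generic sequence of small points, i.e.\ no infinite subsequence in a proper subvariety). Torsion points have Néron--Tate height zero, so any generic sequence in $H$ (which exists in the irreducible $V$ by density, choosing points outside countably many proper closed subvarieties in turn) already satisfies Yuan's hypotheses, and the Galois invariance of $H$ gives full orbits. Then the equidistribution argument of \cite{XY26} at $v$ shows the limiting measure on the Berkovich analytification of $V$ is supported on points reducing into $Z_0$, and since that measure (for $V$ a torsion coset, the Haar-type measure on a skeleton, which for good reduction is the Dirac mass at the Gauss point of $V$) has support whose reduction is the generic point of the reduction of $V$, we get the reduction of $V$ inside the closed set $Z_0$. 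So concretely I would reprove \cite[Proposition 4.2]{XY26} with a generic sequence chosen directly, checking that $\ell$-divisibility is used only to produce such a sequence.
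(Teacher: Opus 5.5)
Your final plan has the same architecture as the paper's proof. You break $V$ into irreducible pieces and use that each is a torsion coset. You do invoke Raynaud's theorem, when you identify the limit measure with the Dirac mass at the Gauss point of $V$; the paper does exactly this, so the $\ell$-divisibility detours can be discarded. In the paper, the divisibility hypothesis of \cite[Proposition 4.2]{XY26} is traded for this structural input about $V$, not for the existence of generic sequences. You then rerun the equidistribution argument on each piece.

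The one step of that rerun you actually spell out has a genuine gap. You pass from ``every Galois orbit reduces into $Z_0$'' to ``the limit measure is supported on points reducing into $Z_0$''. But the reduction map is anticontinuous, so $\mathrm{red}^{-1}(Z_0)$ is \emph{open} in $V^{\mathrm{an}}$, and mass can escape an open set in a weak limit. Already for the unit disc, the Gauss point lies in the closure of the open residue disc $\{|z|<1\}$. As you describe it, the rerun uses only height zero, Galois invariance and reduction into $Z_0$. These cannot suffice, because the prime-to-$p$ hypothesis is never used and the statement is false without it: if $E$ has supersingular reduction at $v$, then $H=E[p^\infty]$ is Galois-stable, Zariski dense in $E$, and reduces entirely to the origin.

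The missing idea is to use prime-to-$p$ to replace this open condition by a closed one. For $p\nmid n$ the group scheme $\mathcal{A}[n]$ is finite \'etale over $O_{K_v}$. Hence every point of $H$ is defined over $K_v^{\mathrm{ur}}$, whose value group is that of $K_v$. Cover $\mathcal{A}$ by finitely many affine opens $U_j$, and choose integral functions $f_{j,1},\dots,f_{j,s}$ on $U_j$ which, together with $\pi_v$, cut out $Z_0\cap U_j$. Then every point of $H$ lies in the closed set $F=\bigcup_j\{x\in\mathrm{red}^{-1}(U_{j,s}) : \max_i|f_{j,i}(x)|\le|\pi_v|\}$, so by the portmanteau theorem the limit measure is supported on $F$. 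On the other hand, suppose the reduction of $V$ is not contained in $Z_0$, and let $\eta$ be its generic point. Then for every $j$ with $\eta\in U_{j,s}$ some $f_{j,i}$ does not vanish at $\eta$, so $\max_i|f_{j,i}(\xi)|=1$ at the Gauss point $\xi$ of $V$. Thus $\xi\notin F$, a contradiction.

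With this in place you could also drop Manin--Mumford, as you wanted. For an arbitrary irreducible $V$, the canonical measure at a place of good reduction is a positive combination of Dirac masses at points lying over the generic points of all the components of the special fibre of the closure of $V$ in $\mathcal{A}$, and the same argument applies. Smallness of your generic sequence then needs $h(V)=0$, which follows from Zhang's theorem on successive minima.
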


\begin{proof}
Let $X$ be the Zariski closure of $H$. Then as $X$ has a Zariski dense set of torsion points, by the Manin--Mumford conjecture \cite{Ray83}, we have $X = \bigcup_{i=1}^{n} (a_i + C_i)$, where $a_i$ is a point and $C_i$ a connected abelian subvariety. We now apply the proof of \cite[Proposition 4.2]{XY26} to each $a_i + C_i$, to obtain that the reduction of $a_i + C_i$ must be contained in $Z_0$ and hence the reduction of $X$ must be contained in $Z_0$.    
\end{proof}

We remark that the proof of \cite[Proposition 4.2]{XY26} relies on Yuan's equidistribution theorem \cite{Yua08} of small points on non-archimedean places. Yuan's theorem relies on the arithmetic Hilbert--Samuel and the arithmetic Siu's inequality which are rather technical. We also invoke the Manin--Mumford conjecture in the proof of our extension. However Proposition \ref{NonArchimedean1} admits a relatively elementary proof using the Arakelov--Green functions of Looper \cite{Loo24} which does not use Yuan's theorem nor the Manin--Mumford conjecture. The reader may see \cite[Section 8]{LY26} for more details. 
\par 
We now begin our proof of Theorem \ref{IntroTheorem1}. The outline of our proof is already given in the introduction. Let $A$ be an abelian variety over a number field $K$ and let $\pi:A \to B$ be an isogeny over $K$. Then $B = A/H$ where $H$ is a finite $\Gal(\ovl{K}/K)$-invariant set. We aim to show that $B$ embeds into $A^n$ for some $n \geq 1$. Then the isomorphism classes of $B$'s is finite by \cite[Proposition 1.4]{Rem17}, where we allow $n$ to vary. This proof uses abstract properties of abelian varieties over a field $K$ and nothing special about number fields. 
\par 
We may view the problem of embedding $B \xhookrightarrow{} A^n$ as mapping $A \to A^n$, with $H$ being contained in the kernel. Such a map is given by elements $e_1,\ldots,e_n \in \End_K(A)$ such that $H \subseteq\cap_{i=1}^{n} \ker(e_i)$. We may thus define
$$I_H = \{ u \in \End_K(A) \mid H \subseteq \ker(u)\}$$
and set $\tilde{H} = \bigcap_{u \in I_H} \ker(u)$. Then $\tilde{H}$ is $\Gal(\ovl{K}/K)$-invariant and $H \subseteq \tilde{H}$. The aim is to bound the index $[\tilde{H}:H]$ uniformly, depending only on $A$ and $K$. 
\par 
In Xie--Yang's paper, they obtain Faltings' isogeny theorem by combining their algebraization theorem \cite[Proposition 4.2]{XY26} with Tate's isogeny theorem over finite fields. Similarly over here, we will use the finiteness of an isogeny class over finite fields. 
\par 
We now choose a place $v$ of $K$ with good reduction for $A$. We let $H_{p'}, H_p$ be the prime-to-$p$ part and $p$-primary part of $H$ respectively. We let $H_{p',0}$ be the reduction of $H_{p'}$ to the special fiber, so that $|H_{p',0}| = |H_{p'}|$.

\begin{proposition} \label{FiniteField1}
There exists an $f_0 \in \End_K(A_0)$ such that $H_{p',0} \subseteq \ker(f_0)$, and $\ker(f_0)/H_{p',0}$ bounded independent of $H$. 
\end{proposition}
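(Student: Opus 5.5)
The plan is to apply to the special fiber the same argument the introduction sketches over $K$, now over the finite field $k(v)$, and to use the finiteness of isogeny classes over finite fields \cite[Corollary 13.13]{MilneAV}. Here $f_0$ should be read as an endomorphism of $A_0$ defined over $k(v)$. The subgroup $H_{p',0}$ is a finite subgroup scheme of $A_0$ that is stable under $\Gal(\ovl{k(v)}/k(v))$, because $H_{p'}$ is $\Gal(\ovl{K}/K)$-stable and reduction is equivariant for the decomposition group, which surjects onto the Galois group of the residue field. It is also étale, since its order is prime to $p$. Hence $B_0 := A_0/H_{p',0}$ is an abelian variety over $k(v)$ that is $k(v)$-isogenous to $A_0$. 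By \cite[Corollary 13.13]{MilneAV}, only finitely many isomorphism classes occur, say $B_0^{(1)},\dots,B_0^{(r)}$, and this list depends only on $A_0$.

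For each $j$ fix once and for all a $k(v)$-isogeny $\psi_j: B_0^{(j)} \to A_0$; one exists because isogeny is a symmetric relation. Let $N$ be the maximum of the degrees $\deg \psi_j$. Now take a given $H$, choose $j$ and an isomorphism $\iota: A_0/H_{p',0} \xrightarrow{\sim} B_0^{(j)}$, and let $q: A_0 \to A_0/H_{p',0}$ be the quotient map. Set
$$f_0 = \psi_j \circ \iota \circ q \in \End(A_0).$$
Then $H_{p',0} = \ker(q)$ is contained in $\ker(f_0)$. Moreover, $\ker(f_0)/H_{p',0}$ is identified via $q$ with $\ker(\psi_j\circ\iota)$, which has order $\deg\psi_j \le N$. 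So the index $[\ker(f_0):H_{p',0}]$ is bounded by $N$, independently of $H$. If one wants a uniform annihilator rather than a bound on the order, note that $N!$ kills every quotient $\ker(f_0)/H_{p',0}$.

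The only point needing care is the one just mentioned: $f_0$ must be defined over $k(v)$, not merely over $\ovl{k(v)}$. This amounts to checking that $H_{p',0}$ descends to a subgroup scheme over $k(v)$. That holds because of the Galois stability of $H_{p',0}$ and because étale subgroup schemes correspond to Galois-stable subgroups of $\ovl{k(v)}$-points. Everything else is the formal finiteness argument above. The restriction to the prime-to-$p$ part is what makes reduction injective, so that $|H_{p',0}| = |H_{p'}|$, and what keeps the quotient étale.
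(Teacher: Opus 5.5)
Your proposal is correct and follows the paper's proof. You form the quotient $A_0 \to A_0/H_{p',0}$, use finiteness of isomorphism classes over the finite field $k(v)$ to get an isogeny back to $A_0$ of uniformly bounded degree, and compose; your check that $H_{p',0}$ is Galois-stable and étale, so that $f_0$ lies in $\End_k(A_0)$ (as needed later when invoking surjectivity of $\Phi$ onto $\End_k(A_0)\otimes_{\bb{Z}}\bb{Q}$), fills in a point the paper leaves implicit.
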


\begin{proof}
There are only finitely many abelian varieties of a given dimension over a finite field \cite[Corollary 13.13]{MilneAV}. This follows from Zarhin's trick \cite[Theorem 13.12]{MilneAV} and the finiteness statement for principally polarized abelian varieties \cite[Theorem 11.2]{MilneAV}, where the latter is straightforward over finite fields. 
\par 
Now $H_{p',0}$ gives us an isogeny $\pi: A_0 \to B$ with kernel $H_{p',0}$. On the other hand, we may find an isogeny $\pi': B \to A_0$ with degree bounded independent of $H$. Taking $f_0 = \pi' \circ \pi: A_0 \to A_0$ gives us the desired endomorphism. 
\end{proof}

We now perform the analogue of constructing the morphism $G_0: A \to A^2$ in the introduction. Replacing $A$ up to isogeny, we may assume that 
$$A = \prod_{i=1}^{r} C_i^{n_i}$$
where $C_i$ is $K$-simple and $C_i,C_j$ are not $K$-isogeneous to each other for $i \not = j$. We let $A_0$ and $C_{i,0}$ denote the special fiber of $A$ and $C_i$ at $v$.
\par 
We let $D_j = \End_K(C_j) \otimes_{\bb{Z}} \bb{Q}$, which is a division algebra, and we let $W_j = \Hom( C_{j,0}, A_0) \otimes_{\bb{Z}} \bb{Q}$. Then $W_j$ is a right $D_j$-module by $w_j \cdot d_j : = w_j \circ d_j$ and we may choose a $D_j$-basis $g_{j,1},\ldots,g_{j,r_j}$ of $W_j$, where $r_j = \dim_{D_j} W_j$. By clearing denominators, we may assume that $g_{j,i}'s$ are all integral. 
\par 
We now define $X = \prod_{i=1}^{r} C_i^{r_i}$ and let $X_0$ denote the special fiber of $X$. We define the homomorphisms on special fibers 
$G_0: X_0 \to A_0$ where on each $C_j^{r_j}$, we send 
$$(x_1,\ldots,x_{r_j}) \mapsto \sum_{i=1}^{r_j} g_{j,i}(x_i).$$
We have the following property.

\begin{proposition} \label{Abelian1}
There does not exist a positive dimensional abelian subvariety $C \subseteq X$, defined over $K$, such that $C_0 \subseteq \ker G_0$ where $C_0$ is the reduction of $C$.  
\end{proposition}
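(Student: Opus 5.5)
We argue by contradiction and reduce to a statement about homomorphisms. Suppose $C \subseteq X$ is a positive dimensional abelian subvariety over $K$ with $C_0 \subseteq \ker G_0$. By Poincaré reducibility over $K$, $C$ contains (indeed is isogenous to a product of) $K$-simple factors, each isogenous to some $C_j$ because $X = \prod C_i^{r_i}$. Replacing $C$ by the image of a nonzero homomorphism $\iota: C_j \to X$ over $K$ whose image lies in $C$, we still have $\iota(C_j)_0 \subseteq \ker G_0$. So it suffices to show that for every $j$ and every $K$-homomorphism $\iota: C_j \to X$ with $G_0 \circ \iota_0 = 0$, we have $\iota = 0$.

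Next write $\iota$ in components. Since $C_i$ and $C_j$ are not $K$-isogenous for $i\neq j$ and $C_j$ is simple, $\Hom_K(C_j, C_i) = 0$ for $i \neq j$. Hence $\iota$ factors through the factor $C_j^{r_j}$ and is given by $(d_1,\ldots,d_{r_j})$ with $d_i \in \End_K(C_j)$. Reduction is compatible with composition and sums, so
$$G_0 \circ \iota_0 = \sum_{i=1}^{r_j} g_{j,i} \circ (d_i)_0 \in \Hom(C_{j,0}, A_0).$$
Here $(d_i)_0$ is the reduction of $d_i$. We view $\End_K(C_j)$ inside $\End(C_{j,0})$ via the injective reduction map; injectivity holds because reduction of homomorphisms between abelian schemes with good reduction is injective. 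This is exactly the right $D_j$-action on $W_j$ used to define the basis. The equation $G_0\circ\iota_0 = 0$ then reads $\sum_i g_{j,i}\cdot d_i = 0$ in $W_j$.

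Since $g_{j,1},\ldots,g_{j,r_j}$ is a $D_j$-basis of $W_j$, they are right $D_j$-linearly independent. Hence all $d_i = 0$ in $D_j$, and so, $\End_K(C_j)$ being torsion-free, $d_i = 0$. Thus $\iota = 0$, a contradiction.

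The main point to check carefully is the bookkeeping in the first step. We need that a positive dimensional $C$ with $C_0\subseteq \ker G_0$ yields a nonzero $\iota$ from some $C_j$ into $X$ whose reduction is killed by $G_0$. The natural route is to take a $K$-simple abelian subvariety $C'\subseteq C$, which is isogenous to some $C_j$ by uniqueness of the isotypic decomposition, and compose an isogeny $C_j\to C'$ with the inclusion. The reduction of $C'$ is contained in $C_0$ by properness of the Néron models. We also need that the right $D_j$-module structure on $W_j$ really is through reductions of endomorphisms, so that the identification of $G_0\circ\iota_0$ with $\sum g_{j,i}\cdot d_i$ is legitimate. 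Both are standard, and the rest is linear algebra.
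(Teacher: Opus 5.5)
Your proposal is correct and follows the same route as the paper. Both arguments take a nonzero $K$-homomorphism $C_j \to C \hookrightarrow X$, observe that it lands in $C_j^{r_j}$ and is given by $(d_1,\ldots,d_{r_j})$ with $d_i \in \End_K(C_j)$, and derive $\sum_i g_{j,i}\circ d_i = 0$, contradicting the $D_j$-linear independence of the basis $g_{j,1},\ldots,g_{j,r_j}$; your extra bookkeeping (choosing a $K$-simple subvariety of $C$, checking that reduction respects containment and composition) is what the paper leaves implicit.
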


\begin{proof}
Since $C \subseteq X$, there must exist some $C_j$ with a non-zero homomorphism $f: C_j \to C$ and we may consider $f: C_j \to C \xhookrightarrow{} X$. Then as $C_i,C_j$ are not isogeneous, $f$ maps $C_j$ to $C_j^{r_j}$ and we may write it as $(d_1,\ldots,d_{r_j})$ for $d_i \in D_j$. Since $C \subseteq \ker(G_0)$, we must have 
$$\sum_{i=1}^{r_j} g_{j,i} \circ d_i = 0,$$
but this contradicts the linear independence of $g_{j,i}$ as desired.
\end{proof}

Let $k$ be the residue field at $v$. We now consider the map 
$$\Phi: \Hom_K(A,X) \otimes_{\bb{Z}} \bb{Q} \to \End_k(A_0) \otimes_{\bb{Z}} \bb{Q}$$
given by $c \mapsto G_0 \ovl{c},$ where $\ovl{c}$ is the reduction of $c$. We have the following Proposition.

\begin{proposition} \label{Abelian2}
The map $\Phi$ is surjective. 
\end{proposition}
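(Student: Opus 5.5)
The plan is to decompose both sides along the product structure $A=\prod_{j} C_j^{n_j}$ and observe that $\Phi$ is, factor by factor, just the $D_j$-linear map that expresses elements of $W_j$ in the chosen basis $g_{j,1},\ldots,g_{j,r_j}$. Surjectivity then reduces to the fact that the $g_{j,i}$ span $W_j$ over $D_j$.

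First I rewrite the target. Since $A_0=\prod_j C_{j,0}^{n_j}$, an element $\varphi\in \End_k(A_0)\otimes\bb{Q}$ is determined by its restrictions $\varphi_{j,k}:=\varphi\circ\iota_{j,k}$. Here $\iota_{j,k}:C_{j,0}\to A_0$ is the inclusion of the $k$-th copy of $C_{j,0}$ in $C_{j,0}^{n_j}$, for $1\le k\le n_j$. So
$$\End_k(A_0)\otimes\bb{Q}\cong \bigoplus_j W_j^{\,n_j},\qquad \varphi\mapsto(\varphi_{j,k})_{j,k},$$
with inverse $(\psi_{j,k})\mapsto \sum_{j,k}\psi_{j,k}\circ \mathrm{pr}_{j,k}$. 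Because $g_{j,1},\ldots,g_{j,r_j}$ is a $D_j$-basis of $W_j$, each component can be written as
$$\varphi_{j,k}=\sum_{i=1}^{r_j} g_{j,i}\circ \ovl{d_{i,k}}, \qquad d_{i,k}\in D_j,$$
where $\ovl{d}$ denotes reduction at $v$. This reduction is how $D_j$ acts on $W_j$ in the definition of the right module structure.

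Next I build a preimage. Define $c\in \Hom_K(A,X)\otimes\bb{Q}$ to be zero on every factor except as follows: on $C_j^{n_j}$ it maps into $C_j^{r_j}\subseteq X$ via the $r_j\times n_j$ matrix $(d_{i,k})$,
$$(x_1,\ldots,x_{n_j})\mapsto\Big(\sum_{k} d_{1,k}x_k,\ \ldots,\ \sum_k d_{r_j,k}x_k\Big).$$
This is defined over $K$ after multiplying by an integer to clear denominators, which is harmless since we work $\otimes\bb{Q}$. Reduction is additive and compatible with composition and with products, so $\ovl{c}$ is given on $C_{j,0}^{n_j}$ by the matrix $(\ovl{d_{i,k}})$. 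Hence by the definition of $G_0$,
$$G_0\,\ovl{c}\circ\iota_{j,k}=\sum_{i} g_{j,i}\circ\ovl{d_{i,k}}=\varphi_{j,k}$$
for all $j,k$. Therefore $\Phi(c)=\varphi$.

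I do not expect a serious obstacle; the points to check carefully are bookkeeping. The first is that the module structure on $W_j$ really is $w\cdot d=w\circ\ovl{d}$, so the basis expansion uses reductions of $K$-endomorphisms. The second is that reduction $\Hom_K\to\Hom_k$ is a homomorphism compatible with composition and with the product decomposition. This holds by the Néron mapping property over $O_{K_v}$. The third is that $\Phi$ is $\bb{Q}$-linear, so integrality issues disappear after tensoring with $\bb{Q}$.
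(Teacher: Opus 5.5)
Your proposal is correct and takes the same approach as the paper. The paper's proof is just the one line ``elementary linear algebra and the fact that the $g_{j,i}$ form a $D_j$-basis'', and your argument is that linear algebra spelled out: identify $\End_k(A_0)\otimes\mathbb{Q}$ with $\bigoplus_j W_j^{n_j}$, expand each component in the $g_{j,i}$ (spanning is all that is needed here), and take $c$ to be the block matrix of coefficients $(d_{i,k})$.
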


\begin{proof}
This follows from elementary linear algebra and the fact that we chose $g_{j,1},\ldots,g_{j,r_j}$ to be a $D_j$-basis. 
\end{proof}

We now apply Proposition \ref{NonArchimedean1}. Let $T$ be the set of all prime-to-$p$ torsion points $x$ in $X(\ovl{K})$, such that every $\Gal(\ovl{K}/K)$-conjugate of $x$ reduces to $\ker(G_0)$. 

\begin{proposition} \label{Abelian3}
$T$ is a finite subgroup of $X(\ovl{K})$. Hence there exists $m > 0$ such that $mT = 0$.
\end{proposition}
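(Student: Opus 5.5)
First I check that $T$ is a subgroup; then I use Proposition \ref{NonArchimedean1} to show the Zariski closure of $T$ is finite, which gives finiteness of $T$.

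\emph{$T$ is a subgroup.} Reduction is a group homomorphism on the $O_{K_v}$-points of the N\'eron model. Since $\ker(G_0)$ is a subgroup of $X_0$, the set of prime-to-$p$ torsion points reducing into $\ker(G_0)$ is a subgroup. The Galois action is by group automorphisms. Hence if $x,y\in T$, every conjugate $\sigma(x-y)=\sigma x-\sigma y$ reduces into $\ker(G_0)$, so $T$ is a subgroup. It is $\Gal(\ovl{K}/K)$-invariant by definition, and it consists of prime-to-$p$ torsion points.

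\emph{Finiteness.} Let $V$ be the Zariski closure of $T$ in $X$. The reduction of $\ker(G_0)$ is a closed subgroup scheme $Z_0$ of the special fiber $X_0$. Every point of $T$ reduces into $Z_0$. By Proposition \ref{NonArchimedean1}, applied to the abelian variety $X$, which has good reduction at $v$ because $A$ does and $X$ is built from the same $C_i$, the reduction of $V$ lies in $Z_0=\ker(G_0)$.

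Since $T$ is a group, $V$ is a closed algebraic subgroup of $X$ defined over $K$, because $T$ is Galois stable. Its identity component $V^0$ is an abelian subvariety of $X$ defined over $K$. Alternatively, Manin--Mumford writes $V=\bigcup (a_i+C_i)$, and the group structure forces the $C_i$ to be translates of $V^0$. The reduction of $V^0$ lies in the reduction of $V$, hence in $\ker(G_0)$. If $\dim V^0>0$, this contradicts Proposition \ref{Abelian1}. So $V^0=0$ and $V$ is finite, hence $T\subseteq V(\ovl{K})$ is finite. Taking $m=|T|$ gives $mT=0$.

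\emph{Main obstacle.} The delicate point is the passage from ``reduction of $V$'' to ``reduction of $V^0$''. One must make sense of reducing a subvariety as the special fiber of its flat closure in the N\'eron model. Then the flat closure of $V^0$ lies inside that of $V$, and its special fiber is the reduction $C_0$ appearing in Proposition \ref{Abelian1}.

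One must also check that the special fiber of the closure of $V^0$ lies in $\ker(G_0)$ as a set, or after passing to the reduced structure. This suffices because Proposition \ref{Abelian1} only concerns containment of the abelian variety $C_0$ in $\ker G_0$. Reducedness issues do not affect this set-theoretic containment, so I expect this step to go through with a short remark.
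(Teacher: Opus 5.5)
Your proof is correct and is essentially the paper's argument: $T$ is a Galois-stable subgroup, Proposition \ref{NonArchimedean1} puts the reduction of the identity component of its Zariski closure inside $\ker(G_0)$, and Proposition \ref{Abelian1} then forces that component to be trivial. You rightly invoke Proposition \ref{Abelian1} for the final contradiction (the paper's text cites Proposition \ref{Abelian2}, evidently a slip), and your checks that $T$ is a subgroup and that $X$ has good reduction at $v$ fill in details the paper leaves implicit.
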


\begin{proof}
Observe that $T$ is $\Gal(\ovl{K}/K)$-invariant and is a subgroup of $X(\ovl{K})$. Let $C$ denote the Zariski closure of $T$. If $T$ is infinite, then the connected component $C^{\circ}$ is an abelian subvariety. By Proposition \ref{NonArchimedean1}, its reduction is hence an abelian subvariety $C^{\circ}_0$ that lies in $\ker(G_0)$, contradicting Proposition \ref{Abelian2}. Hence $T$ is finite as desired.
\end{proof}

Now let $f_0$ be the endomorphism in Proposition \ref{FiniteField1}. By Proposition \ref{Abelian2}, there exists a positive integer $s$, uniformly bounded, such that we can find $c \in \Hom_K(A,X)$ satisfying $G_0 \ovl{c} = sf_0$. 

\begin{proposition} \label{Abelian4}
We have $[m]c(H_{p'}) = 0$.
\end{proposition}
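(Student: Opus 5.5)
The plan is to show that $c(H_{p'}) \subseteq T$, after which Proposition \ref{Abelian3} immediately gives $[m]c(H_{p'}) = 0$. First, $c(H_{p'})$ is a set of prime-to-$p$ torsion points of $X(\ovl{K})$, because $c$ is a homomorphism. It is also $\Gal(\ovl{K}/K)$-invariant: $H_{p'}$ is the prime-to-$p$ part of the Galois-stable group $H$, hence Galois-stable, and $c$ is defined over $K$. So it remains to check that every point $y = c(x)$ with $x \in H_{p'}$ reduces into $\ker(G_0)$. Galois-invariance then shows that every conjugate of $y$ is again of this form, so every conjugate also reduces into $\ker(G_0)$, which is exactly the membership condition for $T$.

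For the reduction step, let $x \in H_{p'}$ with reduction $x_0 \in H_{p',0}$. Reduction modulo $v$ on the Néron model is compatible with homomorphisms defined over $K$. The reduction map on $\ovl{K}$-points, taken through an extension of $v$ to $\ovl{K}$, is a group homomorphism that commutes with $c$. Hence the reduction of $c(x)$ is $\ovl{c}(x_0)$. We then compute
$$G_0(\ovl{c}(x_0)) = (G_0\ovl{c})(x_0) = s f_0(x_0) = s \cdot 0 = 0,$$
since $H_{p',0} \subseteq \ker(f_0)$ by Proposition \ref{FiniteField1}. Thus $c(x)$ reduces into $\ker(G_0)$, as needed.

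The only point requiring care, and the main obstacle, is that the defining condition of $T$ refers to all $\Gal(\ovl{K}/K)$-conjugates and to a fixed choice of reduction map at a place above $v$. I would handle this exactly as above: Galois-stability of $c(H_{p'})$ reduces the condition on all conjugates to the same statement for elements of $c(H_{p'})$, and those elements have already been treated. One should also note that the reduction is injective on prime-to-$p$ torsion, though this is not actually needed here. With $c(H_{p'}) \subseteq T$ established, $mT = 0$ gives $[m]c(H_{p'}) = 0$.
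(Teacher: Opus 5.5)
Your proposal is correct and follows the paper's argument. Like the paper, you show $c(H_{p'}) \subseteq T$ using $G_0\ovl{c} = sf_0$ and $H_{p',0} \subseteq \ker(f_0)$, and then apply $mT = 0$ from Proposition \ref{Abelian3}; you also spell out the compatibility of reduction with $c$ and the Galois-stability of $c(H_{p'})$, which the paper leaves implicit.
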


\begin{proof}
Indeed as $f_0(H_{p',0}) = 0$, it follows that the reduction of $c(H_{p'})$ lies in $\ker(G_0)$. Hence $c(H_{p'}) \subseteq T$ which gives us $mc(H_{p'}) = 0$ by Proposition \ref{Abelian3}.     
\end{proof}

We now take some embedding $X \hookrightarrow{} A^N$ for some $N \geq 1$. Then $[m]c$ can be viewed as an element of $\Hom(A,A^N)$. Let $u_1,\ldots,u_N \in \End(A)$ be the coordinate components, so that $H_{p'} \subseteq \ker(u_i)$. We may find $e$ large enough so that $H_p \subseteq \ker([p^e] u_i)$, and so $H \subseteq \tilde{H} \subseteq \ker([p^e] u_i)$. We now bound the prime-to-$p$ part of $\tilde{H}$, denoted by $\tilde{H}_{p'}$. 

\begin{proposition} \label{Abelian5}
We have $|\tilde{H}_{p'}| \leq C |H_{p'}|$, where $C > 0$ is some constant independent of $H$.    
\end{proposition}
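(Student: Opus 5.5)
Since $\tilde{H} = \bigcap_{u \in I_H} \ker(u)$ and each $[p^e]u_i$ lies in $I_H$, we have $\tilde{H} \subseteq \bigcap_{i=1}^N \ker([p^e]u_i)$. Taking prime-to-$p$ parts, and using that multiplication by $p^e$ is an automorphism on the prime-to-$p$ torsion, we get
$$\tilde{H}_{p'} \subseteq \Big(\bigcap_{i=1}^N \ker(u_i)\Big)_{p'} = \ker([m]c)_{p'},$$
where $[m]c$ is viewed in $\Hom(A,X)$. The last equality holds because $X \hookrightarrow A^N$ is injective, so $[m]c$ and $(u_1,\dots,u_N)$ have the same kernel. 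It therefore suffices to bound $|\ker([m]c)_{p'}|$ by $C|H_{p'}|$.

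To do this, we pass to the special fiber. Because $A$ has good reduction at $v$, reduction is injective on prime-to-$p$ torsion and commutes with homomorphisms. Hence $\ker([m]c)_{p'}$ injects into $\ker([m]\ovl{c})_{p'}$ on $A_0$. Next, $\ker([m]\ovl{c}) \subseteq \ker(G_0 \circ [m]\ovl{c}) = \ker([ms]f_0)$, using $G_0\ovl{c} = sf_0$ from Proposition \ref{Abelian2}. Therefore
$$|\tilde{H}_{p'}| \leq |\ker([ms] f_0)_{p'}|.$$

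It remains to bound the right-hand side. We have $[ms]f_0 = [ms]\circ f_0$, and so $|\ker([ms]f_0)| \leq |\ker f_0| \cdot |A_0[ms]| = |\ker f_0|\,(ms)^{2g}$, where $g = \dim A$ (the same bound holds for the prime-to-$p$ parts). By Proposition \ref{FiniteField1}, $|\ker(f_0)_{p'}| \leq C' |H_{p',0}| = C'|H_{p'}|$ with $C'$ independent of $H$. Combining these gives $|\tilde{H}_{p'}| \leq C'(ms)^{2g}|H_{p'}|$.

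The step needing care is checking that $m$ and $s$ are genuinely independent of $H$. The integer $m$ comes from Proposition \ref{Abelian3} and depends only on $X$, $G_0$ and $v$. For $s$, I would argue as follows. The map $\Phi$ is a surjective $\bb{Q}$-linear map, and $\End(A_0)$ is a lattice in $\End(A_0) \otimes \bb{Q}$. Choose a $\bb{Z}$-basis of $\End(A_0)$ and integral preimages of $s_0$ times each basis vector, for some fixed $s_0$. This gives a single $s = s_0$ that works for every $f_0 \in \End(A_0)$. This is the uniformity claimed just before Proposition \ref{Abelian4}, and I would state it explicitly.

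The remaining points are that the finitely many auxiliary choices ($X$, $G_0$, the embedding $X \hookrightarrow A^N$, $v$) are fixed once and for all, and that the count of $A_0[ms]$ only involves $ms$. Together these make $C = C'(ms)^{2g}$ independent of $H$.
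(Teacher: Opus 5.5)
Your proof is correct and follows the same route as the paper. You pass from $\tilde{H}_{p'}$ to $\ker([m]c)_{p'}$, reduce injectively to the special fiber, and use $G_0\ovl{c} = sf_0$ to land inside $\ker([ms]f_0)$, whose size Proposition \ref{FiniteField1} controls; you also correctly make explicit two points the paper leaves implicit, namely the factor $(ms)^{2g}$ from $\deg[ms]$ and the existence of a single uniform $s$ via integral preimages of $s_0$ times a $\bb{Z}$-basis of $\End(A_0)$.
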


\begin{proof}
By definition, we have $\cap_{i=1}^{N} \ker(u_i) = \ker([m]c)$. Since multiplying by powers of $p$ do not affect the prime-to-$p$ part of the kernel, we have $[m]c(\tilde{H}_{p'}) = 0$. Hence $[m] \ovl{c}(\tilde{H}_{p'}) = 0$ and thus $[m] G_0 \ovl{c}(\tilde{H}_{p',0}) = 0$. This gives us $[ms] f_0(\tilde{H}_{p',0}) = 0$. By Proposition \ref{FiniteField1}, we have $\deg(f_0)/H_{p',0}$ is bounded independent of $H$ and so $|\tilde{H}_{p',0}|/H_{p',0}$ is bounded independent of $H$ as desired.
\end{proof}

We have now successfully controlled the prime-to-$p$ torsion part. To control the $p$-primary part, we simply find another place $w$ where the residue characteristic is a prime $q$ different from $p$. This bounds the $p$-primary part too and hence $[\tilde{H}:H]$ is uniformly bounded, independent of $H$. Hence as there are only finitely many $K$-isomorphism classes of $A/\tilde{H}$, it follows that there are only finitely many $K$-isomorphism classes of $A/H$. This finishes the proof of Theorem \ref{IntroTheorem1}. 

\printbibliography
\end{document}